\documentclass[12pt,reqno]{amsart}
\allowdisplaybreaks[4]
\usepackage{microtype}
\usepackage{tikz}
\usepackage{xcolor}
\usepackage{latexsym,amsmath,amssymb,epic}
\usepackage{latexsym,amssymb,epic}
\usepackage{amsmath,amsthm}
\usepackage{color}
\usepackage[bottom=3.5cm,left=3.2cm,right=3.2cm]{geometry}
\usepackage{etoolbox}
\usepackage[shortlabels]{enumitem}
\usepackage[noadjust]{cite}
\usepackage{hyperref}
\hypersetup{colorlinks=true, 
	linktoc=all, 
	linkcolor=blue} 

\numberwithin{equation}{section}

\theoremstyle{plain}
\newtheorem{theorem}{Theorem}[section]
\newtheorem*{theorem*}{Theorem}

\newtheorem{lemma}[theorem]{Lemma}

\theoremstyle{definition}

\newtheorem*{example*}{Example}
\newtheorem{conjecture}[theorem]{Conjecture}
\newtheorem*{conjecture*}{Conjecture}
\newtheorem{remark}[theorem]{Remark}
\newtheorem*{remark*}{Remark}
\newtheorem*{remarks*}{Remarks}

\makeatletter
\patchcmd{\@settitle}{\uppercasenonmath}{\boldmath\uppercasenonmath}{}{}
\patchcmd{\section}{\scshape}{\bfseries\boldmath}{}{}
\patchcmd{\subsection}{\bfseries}{\bfseries\boldmath}{}{}
\renewcommand{\@secnumfont}{\bfseries}
\makeatother

\makeatletter
\renewcommand{\maketag@@@}[1]{\hbox{\normalsize\normalfont#1}}%
\@namedef{subjclassname@2020}{\textup{2020} Mathematics Subject Classification}

\makeatother

\begin{document}

\title[Lambert series and double Lambert series]
{Identities and transformations for Lambert series and double Lambert series}

\author[S.-P. Cui]{Su-Ping Cui}
\address[Su-Ping Cui]{School of Mathematical Sciences, Qufu Normal University,
Qufu, Shandong 810008, P.R. China}
\email{jiayoucui@163.com}

\author[D. Tang]{Dazhao Tang}

\address[Dazhao Tang]{School of Mathematical Sciences, Chongqing Normal University,
Chongqing 401331, P.R. China}
\email{dazhaotang@sina.com}


\begin{abstract}
We establish two identities for Lambert series and double Lambert series, thereby
resolving conjectures of Andrews, Dixit, Schultz and Yee (Acta Arith.~181:253--286,
2017), as well as Amdeberhan, Andrews and Ballantine (J Combin Theory Series A
221:106154, 2026). The proofs are based on classical transformations in the theory
of infinite series together with a systematic rearrangement of double Lambert series.
\end{abstract}

\subjclass[2020]{11B83, 11B65, 05A15}

\keywords{Lambert series; double Lambert series; identities; transformations}

\maketitle

\section{Introduction}

In 1771, J. H. Lambert \cite{Lam1965} demonstrated that the generating function of the
divisor function can be expressed as
\begin{align*}
\sum_{n=1}^\infty\dfrac{q^n}{1-q^n}.
\end{align*}
This expression naturally leads to the notion of a Lambert series, which in general
takes the form
\begin{align*}
\sum_{n=1}^\infty\dfrac{a_n\!\:q^n}{1-q^n},
\end{align*}
where the sequence of coefficients ${a_n}$ governs the arithmetic and analytic
properties of the series. Of particular interest is the case $a_n=n^{2k-1}$, for which
the resulting Lambert series is a modular form, or a quasimodular form when $k=1$. A
comprehensive account of Lambert series identities and their applications may be found
in the survey of Schmidt \cite{Sch2020}. Building on this classical framework, recent
developments have significantly expanded the scope of Lambert series. Notably,
Amdeberhan, Andrews and Ballantine \cite{AAB2026} introduced a substantial
generalization by considering generalized Lambert series of the form
\begin{align*}
\sum_{n=1}^\infty R_{n}(q^n,q),
\end{align*}
where $R_n(x,y)$ is a rational function in two variables. Furthermore, they proposed a
two-parameter extension, termed a double Lambert series \cite{AAB2026}, given by
\begin{align*}
\sum_{n=1}^\infty S_{n,m}(q^n,q^m,q),
\end{align*}
where $S_{n,m}(x,y,z)$ is a rational function in three variables. These generalizations
not only broaden the classical theory but also reveal new structural connections with
partition theory and modular phenomena.

Parallel to these analytic developments, deep connections between $q$-series and
partition theory have continued to emerge. In 2015, Andrews, Dixit and Yee
\cite{ADY2015} discovered a novel partition-theoretic interpretation for the
coefficients of the Ramanujan/Watson mock theta function
$\omega(q)$, defined by
\begin{align*}
\omega(q):=\sum_{n=0}^\infty\dfrac{q^{2n^2+2n}}{(q;q^2)_{n+1}^2},
\end{align*}
where $q$ is a complex number satisfying $|q|<1$, and the standard $q$-Pochhammer
symbols are given by
\begin{align*}
(a;q)_n &:=\prod_{j=0}^{n-1}(1-aq^j),\\
(a;q)_\infty &:=\prod_{j=0}^{\infty}(1-aq^j).
\end{align*}
If $p_\omega(n)$ denotes the coefficient of $q^n$ in the series
$q\omega(q)$, then $p_\omega(n)$ enumerates the number of partitions of $n$ in
which all odd parts are less than twice the smallest part. This interpretation was
subsequently extended by Andrews, Dixit, Schultz and Yee \cite{ADSY2017} to the
setting of overpartitions. Recall that an overpartition \cite{CL2004} is a partition
in which the first (equivalently, the last) occurrence of each distinct part may be
overlined.

Motivated by this extension, Andrews et al.\ \cite{ADSY2017} introduced
$\overline{p}_\omega(n)$ to denote the number of
overpartitions of $n$ such that all odd parts are less than twice the smallest part,
with the additional condition that the smallest part is always overlined. They
established several striking congruence properties for $\overline{p}_\omega(n)$. In
particular, using $q$-series techniques, they \cite[Theorem~1.4]{ADSY2017} proved that
for any $n\geq0$,
\begin{align}
\overline{p}_\omega(4n+3) &\equiv0\pmod{4},\label{ADSY-cong-1}\\
\overline{p}_\omega(8n+6) &\equiv0\pmod{4}.\label{ADSY-cong-2}
\end{align}
At the conclusion of their paper, Andrews et al.\ \cite{ADSY2017} remarked that an
alternative proof of \eqref{ADSY-cong-1} and \eqref{ADSY-cong-2} could be derived
assuming the validity of the following conjecture.

\begin{conjecture}{\cite[Problem~2]{ADSY2017}}\label{ADSY-conj}
Let
\begin{align*}
Y(q):=\sum_{n=1}^\infty\sum_{m=1}^\infty\dfrac{(-1)^mq^{2nm+m}}{(1-q^{2m-1})(1+q^n)}.
\end{align*}
Then $Y(q)$ is an odd function of $q$.
\end{conjecture}

The first objective of the present paper is to prove the Andrews--Dixit--Schultz--Yee
(ADSY) conjecture. More precisely, we derive a new representation of $Y(q)$, from
which its parity property follows immediately.

\begin{theorem}\label{Main-THM}
We have
\begin{align}
Y(q)=-q\!\;\dfrac{(q^4;q^4)_\infty^4}{(q^2;q^2)_\infty^2}\sum_{k=1}^{\infty}
\dfrac{q^{2k}}{1+q^{2k}}.\label{exp-Y-q}
\end{align}
\end{theorem}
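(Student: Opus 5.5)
We will first reduce the double series to a single Lambert series with theta-function coefficients, and then evaluate the inner sums in closed form.

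\textbf{Step 1: expand in $n$.} For fixed $m$, expand $1/(1+q^n)=\sum_{j\ge0}(-1)^jq^{nj}$ and sum over $n\ge1$. This gives
\begin{align*}
Y(q)=\sum_{m\ge1}\frac{(-1)^mq^{m}}{1-q^{2m-1}}\sum_{j\ge0}(-1)^j\frac{q^{(2m+j)}}{1-q^{2m+j}}.
\end{align*}
Alternatively, we can keep $n$ and expand the factor $1/(1-q^{2m-1})$ as a geometric series instead. The aim is to reach a form where the sum over $m$ is a classical bilateral or partial-fraction sum.

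\textbf{Step 2: rewrite the $m$-sum as a Ramanujan ${}_1\psi_1$ or Lambert-type sum.} For fixed $n$, the $m$-sum is
\begin{align*}
F(x):=\sum_{m\ge1}\frac{(-1)^m x^m}{1-q^{2m-1}}, \qquad x=q^{2n+1}.
\end{align*}
This is half of a bilateral series $\sum_{m\in\mathbb Z}\frac{(-1)^mx^m}{1-aq^{2m}}$ with $a=q^{-1}$ and base $q^2$. Ramanujan's ${}_1\psi_1$ summation, in the form
\begin{align*}
\sum_{m}\frac{z^m}{1-aq^{2m}}=\frac{(q^2,q^2,az,q^2/(az);q^2)_\infty}{(a,q^2/a,z,q^2/z;q^2)_\infty},
\end{align*}
evaluates the bilateral sum as a theta quotient. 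The negative-index part, after $m\to 1-m$, produces a series of the same shape with $x\to q^{2}/x$-type arguments. With $z=-q^{2n+1}$ and $a=q^{-1}$, the product becomes $(q^2;q^2)^2_\infty(-q^{2n};q^2)_\infty(-q^{2-2n};q^2)_\infty$ divided by $(q^{-1},q^3,-q^{2n+1},-q^{1-2n};q^2)_\infty$. This simplifies to an elementary factor times $(q^4;q^4)^2/(q^2;q^2)^{\ldots}$-type constants, up to powers of $q^n$ and factors like $(1+q^{2n})/(1+q^{n})$-ish rational corrections. These constants are where the prefactor $q(q^4;q^4)^4_\infty/(q^2;q^2)^2_\infty$ should originate; it equals $q\psi(q^2)^2$-type, which is a sum of two squares–type theta function.

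\textbf{Step 3: symmetrize and sum over $n$.} Write $Y=\sum_n \frac{1}{1+q^n}\,F(q^{2n+1})$. We then use the ${}_1\psi_1$ evaluation to express $F(q^{2n+1})+G_n$ as a rational function of $q^n$ times a constant, where $G_n$ is the tail obtained by reflection. The tail terms should reindex, via $n\to -n$ or $n\to n+$const, into $-Y$ plus boundary terms, or into $Y(-q)$-related terms. After this, the surviving single Lambert series in $n$ should telescope, via $\frac{1}{1+q^n}-\frac{q^{n}}{1+q^{2n}}$-type identities, into $\sum_k q^{2k}/(1+q^{2k})$ times the constant. The oddness of the right side of \eqref{exp-Y-q} gives a consistency check at each stage.

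\textbf{Main obstacle.} The hard part will be Step 3: handling the negative-index half of the bilateral sum and the rearrangement of the resulting double series, including justifying the interchange of summation, so that the leftover pieces cancel exactly rather than leaving another double Lambert series. If the direct reflection does not close up, the fallback is to prove \eqref{exp-Y-q} by comparing both sides as functions of an auxiliary variable. Concretely, replace $(-1)^m$ by $z^m$, show that both sides are elliptic in $z$ with the same poles and residues, and then specialize to $z=-1$.
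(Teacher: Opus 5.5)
\textbf{There is a genuine gap: Step 3 carries the entire content of the theorem, but it is only sketched, and the mechanisms you propose for it do not work.}

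\emph{The reflection step is invalid.} The ${}_1\psi_1$ evaluation is used outside its domain. The bilateral sum $\sum_{m\in\mathbb{Z}} z^m/(1-q^{2m-1})$ converges only for $|q|^2<|z|<1$. For $z=-q^{2n+1}$ with $n\ge1$ one has $|z|<|q|^2$, so the $m\le 0$ half diverges. That half is your reflected series $F(q^{1-2n})$, so there is no convergent tail to reindex by $n\to-n$.

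\emph{What Step 2 actually gives.} Expanding $1/(1-q^{2m-1})$ geometrically and summing over $m$ gives, elementarily,
$F(q^{2n+1})=-q^n\sum_{k>n}q^k/(1+q^{2k-1})$.
Your instinct about the prefactor is right. With $P:=q(q^4;q^4)_\infty^4/(q^2;q^2)_\infty^2$, ${}_1\psi_1$ at $z=-q$ (inside the annulus) gives $F(q)=-P$, i.e.\ $\sum_{k\ge1}q^k/(1+q^{2k-1})=P$. Substituting back, however, yields only
\[
Y(q)=-P\sum_{n\ge1}\frac{q^n}{1+q^n}+\sum_{k\ge1}\frac{q^k}{1+q^{2k-1}}\sum_{n\ge k}\frac{q^n}{1+q^n}.
\]
This is exactly the paper's starting point, the rewritten Amdeberhan--Andrews--Ballantine form. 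The theorem is then \emph{equivalent} to evaluating the remaining double series as $P\sum_{j\ge1}q^{2j-1}/(1+q^{2j-1})$. No single Lambert series ``survives'' at this stage, and no telescoping of the type $\frac{1}{1+q^n}-\frac{q^n}{1+q^{2n}}$ produces that evaluation.

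\emph{The missing idea is the paper's parity trick.} Introduce
$Z(q)=\sum_{k,n\ge1}q^{2kn}/\bigl((1+q^{2n-1})(1-q^{2k-1})\bigr)$,
which is even in $q$ because swapping $k$ and $n$ shows $Z(-q)=Z(q)$. Then rearrange both the leftover double series and $Z$ into combinations of $Y$, the auxiliary series $X$, and products of single Lambert series. Finally, compare the identities at $q$ and at $-q$. Using $Z(-q)=Z(q)$ together with $\frac{1}{1-q^{2n}}=\frac12\bigl(\frac{1}{1-q^n}+\frac{1}{1+q^n}\bigr)$, every double series cancels, leaving $Y(-q)$ as $P$ times a single Lambert series.

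\emph{Your fallback also fails as stated.} Replacing $(-1)^m$ by $z^m$ gives
$Y(z;q)=\sum_{n\ge1}G(zq^{2n+1})/(1+q^n)$ with $G(w)=\sum_{m\ge1}w^m/(1-q^{2m-1})$.
This $G$ satisfies the inhomogeneous relation $G(q^2w)=qG(w)-qw/(1-w)$, which is Appell--Lerch (mock) behaviour, not ellipticity. Concretely, $Y(z;q)$ is holomorphic for $|z|<|q|^{-3}$ but has a genuine pole at $z=q^{-3}$, coming from the $n=1$ term. A function quasi-periodic under $z\mapsto q^2z$ would then also have a pole at $z=q^{-1}$, which does not happen. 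So $Y(z;q)$ is neither elliptic nor quasi-periodic in $z$, and a comparison of poles and residues is not available. You also never specify the $z$-deformation of the right-hand side that this comparison would need.
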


Amdeberhan, Andrews and Ballantine \cite{AAB2026} systematically investigated the
interplay between classical Lambert series, multiple Lambert series and classical
$q$-series of the Rogers--Ramanujan type. In the course of their study, they observed
that attempts to prove Conjecture \ref{ADSY-conj} naturally led to two further
conjectures. The second objective of this paper is to confirm one of these
conjectures, which we state below.

\begin{theorem}{\emph{\cite[Conjecture~5.12]{AAB2026}}}\label{Main-THM-2}
If $r$ is a positive integer, then
\begin{align}
[q^{2r}]\!\:\sum_{n=1}^\infty\sum_{m=1}^\infty\dfrac{q^{2mn}}{(1+q^{2n-1})
(1-q^{2m-1})}=[q^{2r}]\!\:\sum_{n=1}^\infty\dfrac{(n-1)q^n}{1+q^{2n-1}}.
\label{AAB-conj}
\end{align}
\end{theorem}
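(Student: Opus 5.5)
The plan is to first simplify the left-hand side, then match it with the even part of the right-hand side. Write
\[
F(q):=\sum_{n=1}^\infty\sum_{m=1}^\infty\frac{q^{2mn}}{(1+q^{2n-1})(1-q^{2m-1})},\qquad
G(q):=\sum_{n=1}^\infty\frac{(n-1)q^n}{1+q^{2n-1}} .
\]

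The first observation is that $F$ is an even function. Replacing $q$ by $-q$ leaves $q^{2mn}$ unchanged and turns the denominator into $(1-q^{2n-1})(1+q^{2m-1})$. Interchanging the summation indices $n\leftrightarrow m$ then returns $F(q)$. Hence $F(q)=F(-q)$, and \eqref{AAB-conj} is equivalent to the single identity
\[
F(q)=\tfrac12\bigl(G(q)+G(-q)\bigr).
\]
So the whole task is to produce a closed Lambert-series form for $F$ and compare it with the even part of $G$.

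Second, I would collapse one of the two sums in $F$. Expanding $1/(1-q^{2m-1})=\sum_{k\ge0}q^{(2m-1)k}$ and summing the geometric series in $m$ gives
\[
F(q)=\sum_{n\ge1}\frac{1}{1+q^{2n-1}}\sum_{j\ge n}\frac{q^{j+n}}{1-q^{2j}},\qquad j=n+k.
\]
Next I would interchange the order of summation, so that for each fixed $j$ the inner sum is the finite sum $\sum_{n=1}^{j}q^{n}/(1+q^{2n-1})$. The factor $(n-1)$ on the right-hand side strongly suggests that a finite sum of this kind telescopes or counts $n-1$ terms.

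To make this precise, I would use partial fractions in the form
\[
\frac{q^{a}}{(1+x)(1-y)}=\frac{q^a}{1+x}+\frac{q^a\,y}{(1+x)(1-y)}
\]
with $x=q^{2n-1}$ and $y=q^{2j}$, or the symmetrized version obtained by averaging the two expressions for $F$ coming from $F(q)=F(-q)$. The aim is to split $F$ into two parts:
\begin{itemize}
\item a single Lambert series over $1+q^{2n-1}$ with a coefficient linear in $n$;
\item a remainder that is visibly odd, or that vanishes after taking the even part.
\end{itemize}
Taking even parts then kills the odd pieces. For instance, $q^n/(1+q^{2n-1})$ has parity governed by $n$ after replacing $q\mapsto -q$. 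This should leave exactly the even part of $G$.

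The main obstacle is this rearrangement step. The double series is only conditionally arranged once partial fractions are applied, so the interchanges must be done on absolutely convergent pieces, and the correct grouping that produces the weight $n-1$ is not obvious. If a clean telescoping does not appear, my fallback is a coefficient comparison, in three steps:
\begin{enumerate}
\item Expand $1/(1+q^{2n-1})$ geometrically as well, so that $[q^{2r}]F$ becomes a signed count of solutions of $2r=j+n+2jt+(2n-1)s$ with $j\ge n\ge1$ and $s,t\ge0$, weighted by $(-1)^s$.
\item Similarly, $[q^{2r}]G$ is a signed, weighted count of solutions of $2r=n+(2n-1)s$ with weight $(-1)^s(n-1)$.
\item Match the two counts by a sign-reversing involution, together with a map that groups the solutions of the first equation sharing fixed $(n,s)$ into classes of size $n-1$.
\end{enumerate}
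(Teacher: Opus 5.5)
\textbf{There is a genuine gap.} Your opening reductions are correct and match the paper exactly. Your $F$ and $G$ are its $Z$ and $L$, the evenness $F(q)=F(-q)$ is \eqref{step-7}, the target $2F(q)=G(q)+G(-q)$ is the last line of the paper's proof, and your collapsed form of $F$ is \eqref{Z-new-exp}. The problem is everything after that. You never identify the remainder, and the expectation that it will be ``visibly odd'' is mistaken. The individual terms $q^n/(1+q^{2n-1})$ have no parity: under $q\mapsto -q$ they become $(-1)^nq^n/(1-q^{2n-1})$, so the denominator changes. Only the full sum has a parity, via the theta-function evaluation $\sum_{n\ge1}q^n/(1+q^{2n-1})=T(q):=q(q^4;q^4)_\infty^4/(q^2;q^2)_\infty^2$, which your plan never uses. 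Your proposed splitting $\frac{1}{(1+x)(1-y)}=\frac{1}{1+x}+\frac{y}{(1+x)(1-y)}$ also does not produce a weight linear in $n$. Its first piece is just $\frac{1}{1-q}\sum_{n\ge1}q^{2n}/(1+q^{2n-1})$.

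Now carry out the decomposition that actually produces the weight $n-1$. Write $n-1=\sum_{k=1}^{n-1}1$ with $1=\frac{q^k}{1+q^k}+\frac{1}{1-q^{2k}}-\frac{q^k}{1-q^{2k}}$. This gives the paper's \eqref{Aux-iden-3}:
\[
G(q)=F(q)-Y(q)+D(q)-T(q)\sum_{n\ge1}\frac{q^n}{1-q^{2n}},
\]
where $Y$ is the ADSY series (recognized via \eqref{AAB-iden}) and $D(q)=\sum_{k\ge1}\sum_{n=1}^{k-1}q^k/\bigl((1-q^{2n})(1+q^{2k-1})\bigr)$. Since $T$ is odd, taking even parts shows your target is equivalent to $Y(q)+Y(-q)=0$, provided one also knows $D(q)+D(-q)=2\frac{(q^4;q^4)_\infty^4}{(q^2;q^2)_\infty^2}\sum_{n\ge1}\frac{q^{2n}}{1-q^{4n-2}}$. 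So the oddness of your remainder is precisely Conjecture~\ref{ADSY-conj}. The paper's remark notes that this conjecture is essentially equivalent to the statement itself. To obtain it, the paper needs all of Section~\ref{sect:proof-1}: the auxiliary series $X$ and $A$, and the explicit formula of Theorem~\ref{Main-THM}. It then needs a separate argument, using the series $N$, for the $D$-identity.

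Your combinatorial fallback does not close the gap either. If $(n,s)$ satisfies $2r=n+(2n-1)s$, then $2r=j+n+2jt+(2n-1)s$ forces $j(2t+1)=0$. So the classes of solutions sharing that same $(n,s)$ are empty. Any matching would have to relate different $(n,s)$ with signed cancellation, and you describe no such involution.
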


The remainder of this paper is organized as follows. In Sect.~\ref{sect:proof-1}, we
first establish two key lemmas and then present the proof of Theorem \ref{Main-THM}.
Building on two additional important lemmas and Theorem \ref{Main-THM}, we prove
Theorem \ref{Main-THM-2} in Sect.~\ref{sect:proof-2}.

\section{Proof of Theorem \ref{Main-THM}}\label{sect:proof-1}

In this section, we present the proof of Theorem \ref{Main-THM}.

We begin by introducing three auxiliary double Lambert series, defined by
\begin{align*}
X(q) &=\sum_{k=1}^\infty\sum_{n=1}^\infty\dfrac{(-1)^kq^{2kn+k}}
{(1-q^n)(1-q^{2k-1})},\\
Z(q) &=\sum_{k=1}^\infty\sum_{n=1}^\infty\dfrac{q^{2kn}}{(1+q^{2n-1})(1-q^{2k-1})},\\
A(q) &=\sum_{k=1}^\infty\sum_{n=k}^\infty\dfrac{q^{k+n}}{(1+q^n)(1+q^{2k-1})}.
\end{align*}
These series will play a central role in decomposing and reorganizing the double sum
defining $Y(q)$.

We first establish the following auxiliary identity that expresses $Y(q)$ in terms of
the series $Z(q)$.

\begin{lemma}
\begin{align}
Y(q)=-\sum_{k=1}^\infty\sum_{n=1}^\infty\dfrac{q^{k+n}}{(1+q^n)(1+q^{2k-1})}
-\sum_{k=1}^\infty\sum_{n=1}^\infty\dfrac{q^{2kn+k}}{(1-q^{2n})(1+q^{2k-1})}+Z(q).
\label{Aux-iden-1}
\end{align}
\end{lemma}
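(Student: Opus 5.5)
The plan is to reduce \eqref{Aux-iden-1} to an identity for the auxiliary series $A(q)$. We evaluate the inner $m$-sum in $Y(q)$ by geometric expansion, and then split $A(q)$ by parity. Throughout, $|q|<1$. Every double or triple series below converges absolutely, so interchanging summations and reindexing are justified.

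\emph{Step 1 (inner sum of $Y$).} Fix $n\ge1$ and expand $1/(1-q^{2m-1})=\sum_{j\ge0}q^{j(2m-1)}$. Then
\begin{align*}
\sum_{m=1}^\infty\frac{(-1)^mq^{(2n+1)m}}{1-q^{2m-1}}
=\sum_{j\ge0}q^{-j}\sum_{m\ge1}\bigl(-q^{2n+2j+1}\bigr)^m
=-\sum_{j\ge0}\frac{q^{2n+j+1}}{1+q^{2n+2j+1}}.
\end{align*}
Set $k=n+j+1$, so that $2n+2j+1=2k-1$ and $2n+j+1=n+k$. This gives
\begin{align*}
Y(q)=-\sum_{k=1}^\infty\sum_{n=1}^{k-1}\frac{q^{k+n}}{(1+q^n)(1+q^{2k-1})}.
\end{align*}
Completing the $n$-range to all $n\ge1$ yields
\[
Y(q)=-\sum_{k,n\ge1}\frac{q^{k+n}}{(1+q^n)(1+q^{2k-1})}+A(q).
\]
The first term is already the first term of \eqref{Aux-iden-1}. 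It therefore remains to show
\[
A(q)=Z(q)-\sum_{k,n\ge1}\frac{q^{2kn+k}}{(1-q^{2n})(1+q^{2k-1})}.
\]

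\emph{Step 2 (expanding $A$ in $n$).} For fixed $k$, expand $q^n/(1+q^n)=\sum_{i\ge1}(-1)^{i-1}q^{ni}$ and sum over $n\ge k$:
\[
\sum_{n\ge k}\frac{q^n}{1+q^n}=\sum_{i\ge1}\frac{(-1)^{i-1}q^{ki}}{1-q^i},
\]
hence
\[
A(q)=\sum_{k\ge1}\sum_{i\ge1}\frac{(-1)^{i-1}q^{k(i+1)}}{(1-q^i)(1+q^{2k-1})}.
\]

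\emph{Step 3 (parity split).} Split according to the parity of $i$.
\begin{itemize}
\item For even $i=2n$, the terms are $-q^{2kn+k}/\bigl((1-q^{2n})(1+q^{2k-1})\bigr)$. These give exactly the negative of the second double sum in \eqref{Aux-iden-1}.
\item For odd $i=2n-1$, the terms are $q^{2kn}/\bigl((1-q^{2n-1})(1+q^{2k-1})\bigr)$. After interchanging the names $k\leftrightarrow n$, this is precisely $Z(q)$, since $q^{2kn}$ is symmetric in $k$ and $n$.
\end{itemize}
Combining Steps 1–3 gives \eqref{Aux-iden-1}.

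The only delicate point I anticipate is bookkeeping: the shift $k=n+j+1$ in Step 1 must produce exactly the strict range $n<k$, so that the complementary range $n\ge k$ is precisely the one defining $A(q)$. The rest is routine geometric-series manipulation.
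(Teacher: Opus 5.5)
Your proof is correct and follows the paper's argument. You complete the $n$-range to isolate $A(q)$, rewrite $A(q)$ as $\sum_{k,i\ge1}(-1)^{i-1}q^{k(i+1)}/\bigl((1-q^i)(1+q^{2k-1})\bigr)$, and split by the parity of $i$ to get the second double sum and $Z(q)$. The only difference is that you derive the starting identity \eqref{AAB-iden} directly by geometric expansion (the same computation the paper does for $X(q)$ in \eqref{step-6}) instead of quoting Proposition~5.3 of Amdeberhan--Andrews--Ballantine, which makes your version self-contained.
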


\begin{proof}
Amdeberhan et al.\ \cite[Proposition~5.3]{AAB2026} proved that
\begin{align}
Y(q)=-\sum_{k=2}^\infty\dfrac{q^k}{1+q^{2k-1}}\sum_{n=1}^{k-1}\dfrac{q^n}{1+q^n}.
\label{AAB-iden}
\end{align}
Rewriting \eqref{AAB-iden} in a more convenient form, we obtain that
\begin{align}
Y(q) &=-\sum_{k=1}^\infty\dfrac{q^k}{1+q^{2k-1}}\sum_{n=1}^{k-1}\dfrac{q^n}{1+q^n}
\nonumber\\
 &=-\sum_{k=1}^\infty\dfrac{q^k}{1+q^{2k-1}}\Bigg(\sum_{n=1}^\infty\dfrac{q^n}
{1+q^n}-\sum_{n=k}^\infty\dfrac{q^n}{1+q^n}\Bigg)\nonumber\\
 &=-\sum_{k=1}^\infty\sum_{n=1}^\infty\dfrac{q^{k+n}}{(1+q^n)(1+q^{2k-1})}+A(q),
\label{step-1}
\end{align}
where the term $A(q)$ is isolated for further analysis.

We now examine $A(q)$ more closely. A sequence of elementary manipulations yields
\begin{align}
A(q) &=\sum_{k=1}^\infty\dfrac{q^k}{1+q^{2k-1}}\sum_{m=0}^\infty\dfrac{q^{m+k}}
{1+q^{m+k}}\nonumber\\
 &=\sum_{k=1}^\infty\dfrac{q^{2k}}{1+q^{2k-1}}\sum_{m=0}^\infty q^m\sum_{n=0}^\infty
(-1)^nq^{mn+kn}\nonumber\\
 &=\sum_{k=1}^\infty\dfrac{q^{2k}}{1+q^{2k-1}}\sum_{n=0}^\infty(-1)^nq^{kn}
\sum_{m=0}^\infty q^{(n+1)m}\nonumber\\
 &=\sum_{k=1}^\infty\dfrac{q^{2k}}{1+q^{2k-1}}\sum_{n=0}^\infty\dfrac{(-1)^nq^{kn}}
{1-q^{n+1}}.\label{step-2}
\end{align}
Next, shifting the index of summation in the inner sum by replacing $n$ with $n-1$ in
the final expression of \eqref{step-2}, we obtain that
\begin{align}
A(q) &=\sum_{k=1}^\infty\sum_{n=1}^\infty\dfrac{(-1)^{n-1}q^{kn+k}}
{(1-q^n)(1+q^{2k-1})}\nonumber\\
 &=-\sum_{k=1}^\infty\sum_{n=1}^\infty\dfrac{q^{2kn+k}}{(1-q^{2n})(1+q^{2k-1})}
+\sum_{k=1}^\infty\sum_{n=1}^\infty\dfrac{q^{2kn}}{(1-q^{2n-1})(1+q^{2k-1})}\nonumber\\
 &=-\sum_{k=1}^\infty\sum_{n=1}^\infty\dfrac{q^{2kn+k}}{(1-q^{2n})(1+q^{2k-1})}+Z(q).
\label{step-3}
\end{align}
Combining \eqref{step-1} and \eqref{step-3} immediately yields the desired identity
\eqref{Aux-iden-1}.
\end{proof}

We next establish the following lemma, which plays a crucial role in the proof of
Theorem \ref{Main-THM}.

\begin{lemma}
\begin{align}
Z(q)=\sum_{k=1}^\infty\sum_{n=1}^\infty\dfrac{q^{k+n}}{(1+q^{2n-1})(1-q^{2k})}
+\dfrac{1}{2}X(q)+\dfrac{1}{2}Y(q).\label{Aux-iden-2}
\end{align}
\end{lemma}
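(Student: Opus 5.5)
The plan is to remove the two halves $\tfrac12X(q)$ and $\tfrac12Y(q)$ in one step, and then prove the remaining identity by expanding each double Lambert series in geometric series and re-indexing.

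\textbf{Step 1: combine $X$ and $Y$.} The summands of $X(q)$ and $Y(q)$ share the factor $(-1)^kq^{2kn+k}/(1-q^{2k-1})$ and differ only in $1/(1-q^n)$ versus $1/(1+q^n)$. Since
\[
\tfrac12\Bigl(\tfrac{1}{1-x}+\tfrac{1}{1+x}\Bigr)=\tfrac{1}{1-x^2},
\]
we get
\begin{align*}
\tfrac12X(q)+\tfrac12Y(q)=\sum_{k=1}^\infty\sum_{n=1}^\infty\dfrac{(-1)^kq^{2kn+k}}{(1-q^{2n})(1-q^{2k-1})}=:W(q).
\end{align*}
So \eqref{Aux-iden-2} is equivalent to
\[
Z(q)-W(q)=B(q),\qquad B(q):=\sum_{k,n\ge1}\dfrac{q^{k+n}}{(1+q^{2n-1})(1-q^{2k})}.
\]

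\textbf{Step 2: put all three series on a common lattice.} Each series should become a sum over pairs $(k,m)$ with the two denominators $(1-q^{2k-1})$ or $(1+q^{2k-1})$ and $(1-q^{2m})$.
\begin{itemize}
\item For $Z(q)$, expand $1/(1+q^{2n-1})=\sum_{j\ge0}(-1)^jq^{(2n-1)j}$, sum the geometric series in $n$, and set $m=k+j$. This gives
\[
Z(q)=\sum_{k\ge1}\sum_{m\ge k}\dfrac{(-1)^{m-k}q^{k+m}}{(1-q^{2k-1})(1-q^{2m})}.
\]
\item For $W(q)$, expand $1/(1-q^{2n})=\sum_j q^{2nj}$, sum over $n$, and put $m=k+j$. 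This gives a sum over the same region $m\ge k$ with denominator $(1-q^{2k-1})(1-q^{2m})$ and numerator $(-1)^kq^{k+2m}$.
\item For $B(q)$, expand the other factor: write $1/(1-q^{2k})=\sum_i q^{2ki}$ and sum over $k$. Alternatively, expand $1/(1+q^{2n-1})$ and sum over $n$. Either way the result should be a sum over the complementary triangle $m<k$, or over the full quadrant, with the same kind of denominators.
\end{itemize}

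\textbf{Step 3: compare summands region by region.} On $m\ge k$, subtract the numerators of $Z$ and $W$ over the common denominator $(1-q^{2k-1})(1-q^{2m})$. One then uses identities of the form
\[
\frac{q^a}{1-q^a}=\frac{1}{1-q^a}-1
\]
and
\[
\frac{1}{(1-x)(1+y)}=\frac{1}{1-xy}\Bigl(\frac{1}{1-x}-\frac{y}{1+y}\Bigr)\cdot(\cdots),
\]
where the exact factor is to be fixed in the computation. The aim is that the factor $(1-q^{2k-1})$ cancels and the difference collapses to a single geometric term. That term should match the $m\ge k$ part of $B(q)$, and the $m<k$ part of $B(q)$ should come out of the leftover boundary terms.

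A cleaner alternative, which I would try first, is to compare coefficients directly. Expand every denominator completely, so that each series becomes a triple sum $\sum \pm q^{\text{linear form}}$ over nonnegative integers. Then find a bijection between the index sets, or split them into sign-cancelling classes. The parity factors $(-1)^{m-k}$ and $(-1)^k$ should be handled by pairing $j$ with $j+1$.

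\textbf{Main obstacle.} Step 1 and the geometric expansions are routine. The hard part is the bookkeeping in Step 3. The region $m\ge k$ has to be matched against the complementary region, and the alternating signs must telescope correctly; in particular the boundary terms at $m=k$ need care. If the direct algebra stalls, I would fall back on the full triple-sum expansion and an explicit involution on exponent triples.
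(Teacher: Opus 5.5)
Your Step 1 is correct and useful, but Step 3, where the whole proof has to happen, is only a hope. The mechanism you hope for there does not occur on the lattice you chose in Step 2. You expanded $1/(1+q^{2n-1})$ in $Z$ and $1/(1-q^{2n})$ in $W$, keeping $1/(1-q^{2k-1})$ in both. The summand of $Z-W$ at $m\ge k$ is then
\[
\frac{q^{k+m}\bigl[(-1)^{m-k}-(-1)^kq^m\bigr]}{(1-q^{2k-1})(1-q^{2m})}.
\]
The bracket is not divisible by $1-q^{2k-1}$: for $k=1$ and $m$ odd it equals $1+q^m$. So nothing cancels termwise, and nothing collapses to a single geometric term. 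Moreover, $B$ only reaches this lattice after rewriting $\sum_n q^n/(1+q^{2n-1})$ as $\sum_k(-1)^{k-1}q^k/(1-q^{2k-1})$. What remains is then an unproved rearrangement between a triangular sum and a full-quadrant sum, which is just the lemma in disguise. Neither the undetermined ``identity'' with the factor $(\cdots)$ nor the unspecified involution on exponent triples fills this.

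The missing idea is to expand the other factor, $1/(1-q^{2k-1})$, in both series and sum over $k$. This keeps exactly the denominators that occur in $B$, and gives
\begin{align*}
Z(q)&=\sum_{n\ge1}\frac{q^n}{1+q^{2n-1}}\sum_{k\ge n}\frac{q^k}{1-q^{2k}},\\
W(q)&=-\sum_{k\ge1}\frac{q^k}{1-q^{2k}}\sum_{n\ge k+1}\frac{q^n}{1+q^{2n-1}}.
\end{align*}
Since $B(q)=\bigl(\sum_{k\ge1}\frac{q^k}{1-q^{2k}}\bigr)\bigl(\sum_{n\ge1}\frac{q^n}{1+q^{2n-1}}\bigr)$ factors, $Z$ and $-W$ are its two complementary triangles $k\ge n$ and $k<n$. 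Hence $Z-W=B$, with no boundary terms at all.

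This is precisely the paper's computation of $Z$ in \eqref{Z-new-exp} and of $X$ in \eqref{step-6}. The paper, however, gets the corresponding expression for $Y$ by citing \eqref{AAB-iden}. It then recombines through $\frac{q^k}{1-q^{2k}}=\frac12\bigl(\frac{q^k}{1-q^k}+\frac{q^k}{1+q^k}\bigr)$, which is your Step 1 read backwards. So once repaired, your route is slightly shorter than the paper's and self-contained, since it never needs Proposition~5.3 of Amdeberhan--Andrews--Ballantine.
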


\begin{proof}
We begin by rewriting $Z(q)$ in several equivalent forms. First, we observe that
\begin{align}
Z(q) &=\sum_{n=1}^\infty\dfrac{1}{1+q^{2n-1}}\sum_{k=1}^\infty\dfrac{q^{2kn}}
{1-q^{2k-1}}\nonumber\\
 &=\sum_{n=1}^\infty\dfrac{1}{1+q^{2n-1}}\sum_{k=1}^\infty q^{2kn}
\sum_{m=0}^\infty q^{2km-m}\nonumber\\
 &=\sum_{n=1}^\infty\dfrac{1}{1+q^{2n-1}}\sum_{m=0}^\infty q^{-m}\sum_{k=1}^\infty
q^{(2m+2n)k}\nonumber\\
 &=\sum_{n=1}^\infty\dfrac{1}{1+q^{2n-1}}\sum_{m=0}^\infty q^{-m}\,
\dfrac{q^{2m+2n}}{1-q^{2m+2n}}\nonumber\\
 &=\sum_{n=1}^\infty\dfrac{1}{1+q^{2n-1}}\sum_{m=0}^\infty\dfrac{q^{m+2n}}
{1-q^{2m+2n}}.\label{step-4}
\end{align}
In the final expression of \eqref{step-4}, we introduce the change of variables
$k=m+n$, which allows us to rewrite the double sum in terms of the new index $k$.
This yields
\begin{align}
Z(q) &=\sum_{n=1}^\infty\dfrac{1}{1+q^{2n-1}}\sum_{k=n}^\infty\dfrac{q^{k+n}}
{1-q^{2k}}\nonumber\\
 &=\sum_{n=1}^\infty\dfrac{q^n}{1+q^{2n-1}}\sum_{k=n}^\infty\dfrac{q^k}{1-q^{2k}}
\label{Z-new-exp}\\
 &=\sum_{k=1}^\infty\dfrac{q^k}{1-q^{2k}}\sum_{n=1}^k\dfrac{q^n}{1+q^{2n-1}}\nonumber\\
 &=\sum_{k=1}^\infty\dfrac{q^k}{1-q^{2k}}\Bigg(\sum_{n=1}^\infty\dfrac{q^n}
{1+q^{2n-1}}-\sum_{n=k+1}^\infty\dfrac{q^n}{1+q^{2n-1}}\Bigg)\nonumber\\
 &=\sum_{k=1}^\infty\sum_{n=1}^\infty\dfrac{q^{k+n}}{(1+q^{2n-1})(1-q^{2k})}
-\sum_{k=1}^\infty\dfrac{q^k}{1-q^{2k}}\sum_{n=k+1}^\infty\dfrac{q^n}{1+q^{2n-1}}
\nonumber\\
 &=\sum_{k=1}^\infty\sum_{n=1}^\infty\dfrac{q^{k+n}}{(1+q^{2n-1})(1-q^{2k})}
-\dfrac{1}{2}\sum_{k=1}^\infty\dfrac{q^k}{1-q^k}\sum_{n=k+1}^\infty
\dfrac{q^n}{1+q^{2n-1}}\nonumber\\
 &\quad-\dfrac{1}{2}\sum_{k=1}^\infty\dfrac{q^k}{1+q^k}\sum_{n=k+1}^\infty
\dfrac{q^n}{1+q^{2n-1}}\nonumber\\
 &=\sum_{k=1}^\infty\sum_{n=1}^\infty\dfrac{q^{k+n}}{(1+q^{2n-1})(1-q^{2k})}
-\dfrac{1}{2}\sum_{k=1}^\infty\dfrac{q^k}{1-q^k}\sum_{n=k+1}^\infty
\dfrac{q^n}{1+q^{2n-1}}+\dfrac{1}{2}Y(q),\label{step-5}
\end{align}
where the last step in \eqref{step-5} follows from interchanging the order of
summation in \eqref{AAB-iden}.

We now turn to the series $X(q)$. A sequence of straightforward transformations shows
that
\begin{align}
X(q) &=\sum_{k=1}^\infty\sum_{n=1}^\infty\dfrac{(-1)^kq^{2kn+k}}
{(1-q^n)(1-q^{2k-1})}\nonumber\\
 &=\sum_{n=1}^\infty\dfrac{1}{1-q^n}\sum_{k=1}^\infty(-1)^kq^{2kn+k}
\sum_{j=0}^\infty q^{(2k-1)j}\nonumber\\
 &=\sum_{n=1}^\infty\dfrac{1}{1-q^n}\sum_{j=0}^\infty q^{-j}\sum_{k=1}^\infty
(-1)^kq^{(2n+2j+1)k}\nonumber\\
 &=\sum_{n=1}^\infty\dfrac{1}{1-q^n}\sum_{j=0}^\infty q^{-j}\,\dfrac{-q^{2n+2j+1}}
{1+q^{2n+2j+1}}\nonumber\\
 &=-\sum_{n=1}^\infty\dfrac{1}{1-q^n}\sum_{j=0}^\infty\dfrac{q^{2n+j+1}}
{1+q^{2n+2j+1}}\nonumber\\
 &=-\sum_{n=1}^\infty\dfrac{q^n}{1-q^n}\sum_{j=1}^\infty\dfrac{q^{n+j}}{1+q^{2n+2j-1}}
\nonumber\\
 &=-\sum_{n=1}^\infty\dfrac{q^n}{1-q^n}\sum_{k=n+1}^\infty\dfrac{q^k}{1+q^{2k-1}}.
\label{step-6}
\end{align}

Finally, combining \eqref{step-5} and \eqref{step-6} yields \eqref{Aux-iden-2},
completing the proof.
\end{proof}

We are now in a position to complete the proof of Theorem \ref{Main-THM}.

\begin{proof}[Proof of Theorem \ref{Main-THM}]
We begin by replacing $q$ with $-q$ in the definition of $Z(q)$. A direct inspection
shows that
\begin{align}
Z(-q)=\sum_{k=1}^\infty\sum_{n=1}^\infty\dfrac{q^{2kn}}{(1-q^{2n-1})(1+q^{2k-1})}
=Z(q).\label{step-7}
\end{align}

Next, substituting $-q$ for $q$ in \eqref{Aux-iden-1} and invoking \eqref{step-7}, we
obtain that
\begin{align}
Y(-q) &=-\sum_{k=1}^\infty\sum_{n=1}^\infty\dfrac{(-1)^{k+n}q^{k+n}}{(1+(-q)^n)
(1-q^{2k-1})}-\sum_{k=1}^\infty\sum_{n=1}^\infty\dfrac{(-1)^kq^{2kn+k}}
{(1-q^{2n})(1-q^{2k-1})}\nonumber\\
 &\quad+Z(-q)\nonumber\\
 &=-\sum_{k=1}^\infty\sum_{n=1}^\infty\dfrac{(-1)^{k+n}q^{k+n}}{(1+(-q)^n)(1-q^{2k-1})}
-\dfrac{1}{2}\sum_{k=1}^\infty\sum_{n=1}^\infty\dfrac{(-1)^kq^{2kn+k}}
{(1-q^n)(1-q^{2k-1})}\nonumber\\
 &\quad-\dfrac{1}{2}\sum_{k=1}^\infty\sum_{n=1}^\infty\dfrac{(-1)^kq^{2kn+k}}
{(1+q^n)(1-q^{2k-1})}+Z(q)\nonumber\\
 &=-\sum_{k=1}^\infty\sum_{n=1}^\infty\dfrac{(-1)^{k+n}q^{k+n}}{(1+(-q)^n)(1-q^{2k-1})}
-\dfrac{1}{2}X(q)-\dfrac{1}{2}Y(q)+Z(q).\label{step-8}
\end{align}
Combining \eqref{Aux-iden-2} with \eqref{step-8} then yields
\begin{align}
Y(-q) &=\sum_{n=1}^\infty\dfrac{q^n}{1+q^{2n-1}}\sum_{k=1}^\infty\dfrac{q^k}{1-q^{2k}}
-\sum_{n=1}^\infty\dfrac{(-q)^n}{1-q^{2n-1}}\sum_{k=1}^\infty\dfrac{(-1)^kq^k}
{1+(-q)^k}.\label{step-9}
\end{align}

We now appeal to the identity established in \cite[p.~20]{AAB2026},
\begin{align}
\sum_{n=1}^\infty\dfrac{q^n}{1+q^{2n-1}}=q\!\;\dfrac{(q^4;q^4)_\infty^4}
{(q^2;q^2)_\infty^2},\label{step-10}
\end{align}
which is readily seen to be equivalent to
\begin{align}
\sum_{n=1}^\infty\dfrac{(-q)^n}{1-q^{2n-1}}=-q\!\;\dfrac{(q^4;q^4)_\infty^4}
{(q^2;q^2)_\infty^2}.\label{step-11}
\end{align}
Substituting \eqref{step-10} and \eqref{step-11} into \eqref{step-9}, we deduce that
\begin{align}
Y(-q) &=q\!\;\dfrac{(q^4;q^4)_\infty^4}{(q^2;q^2)_\infty^2}\Bigg(\sum_{k=1}^\infty
\dfrac{q^k}{1-q^{2k}}+\sum_{k=1}^\infty\dfrac{(-1)^kq^k}{1+(-q)^k}\Bigg)\nonumber\\
 &=q\!\;\dfrac{(q^4;q^4)_\infty^4}{(q^2;q^2)_\infty^2}\Bigg(\sum_{k=1}^\infty
\dfrac{q^k}{1-q^{2k}}-\sum_{k=1}^\infty\dfrac{q^{2k-1}}{1-q^{2k-1}}
+\sum_{k=1}^\infty\dfrac{q^{2k}}{1+q^{2k}}\Bigg)\nonumber\\
 &=q\!\;\dfrac{(q^4;q^4)_\infty^4}{(q^2;q^2)_\infty^2}\Bigg(\sum_{k=1}^\infty
\sum_{m=0}^\infty q^{(2m+1)k}-\sum_{k=1}^\infty\dfrac{q^{2k-1}}{1-q^{2k-1}}
+\sum_{k=1}^\infty\dfrac{q^{2k}}{1+q^{2k}}\Bigg)\nonumber\\
 &=q\!\;\dfrac{(q^4;q^4)_\infty^4}{(q^2;q^2)_\infty^2}\Bigg(\sum_{m=0}^\infty
\sum_{k=1}^\infty q^{(2m+1)k}-\sum_{k=1}^\infty\dfrac{q^{2k-1}}{1-q^{2k-1}}
+\sum_{k=1}^\infty\dfrac{q^{2k}}{1+q^{2k}}\Bigg)\nonumber\\
 &=q\!\;\dfrac{(q^4;q^4)_\infty^4}{(q^2;q^2)_\infty^2}\Bigg(\sum_{m=0}^\infty
\dfrac{q^{2m+1}}{1-q^{2m+1}}-\sum_{k=1}^\infty\dfrac{q^{2k-1}}{1-q^{2k-1}}
+\sum_{k=1}^\infty\dfrac{q^{2k}}{1+q^{2k}}\Bigg)\nonumber\\
 &=q\!\;\dfrac{(q^4;q^4)_\infty^4}{(q^2;q^2)_\infty^2}\sum_{k=1}^\infty
\dfrac{q^{2k}}{1+q^{2k}}.\label{step-12}
\end{align}
Finally, replacing $-q$ by $q$ in \eqref{step-12} yields the desired identity, thereby
completing the proof of Theorem \ref{Main-THM}.
\end{proof}

\section{Proof of Theorem \ref{Main-THM-2}}\label{sect:proof-2}

This section is devoted to the proof of Theorem \ref{Main-THM-2}.

To facilitate the argument, we introduce the following auxiliary functions, which
will be used frequently in the subsequent analysis:
\begin{align*}
N(q) &=\sum_{k=1}^\infty\sum_{n=1}^\infty\dfrac{(-1)^kq^{2kn+k+n}}{(1+q^{2n-1})
(1-q^{2k-1})},\\
D(q) &=\sum_{k=1}^\infty\sum_{n=1}^{k-1}\dfrac{q^k}{(1-q^{2n})(1+q^{2k-1})},\\
L(q) &=\sum_{k=2}^\infty\dfrac{(k-1)q^k}{1+q^{2k-1}}
=\sum_{k=1}^\infty\dfrac{(k-1)q^k}{1+q^{2k-1}}.
\end{align*}

We begin by establishing the following auxiliary identity, which relates $Y(q)$,
$Z(q)$, $L(q)$ and $D(q)$ to certain infinite series.

\begin{lemma}
\begin{align}
Y(q)+L(q)=D(q)+Z(q)-q\!\;\dfrac{(q^4;q^4)_\infty^4}{(q^2;q^2)_\infty^2}
\sum_{n=1}^\infty\dfrac{q^n}{1-q^{2n}}.\label{Aux-iden-3}
\end{align}
\end{lemma}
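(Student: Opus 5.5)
The proof is a direct termwise comparison. It uses three earlier results: the Amdeberhan--Andrews--Ballantine representation \eqref{AAB-iden} of $Y(q)$, the representation \eqref{Z-new-exp} of $Z(q)$ from the proof of \eqref{Aux-iden-2}, and the product evaluation \eqref{step-10}. Throughout I write
\begin{align*}
P(q):=\sum_{n=1}^\infty\dfrac{q^n}{1+q^{2n-1}}=q\!\;\dfrac{(q^4;q^4)_\infty^4}{(q^2;q^2)_\infty^2},\qquad S(q):=\sum_{n=1}^\infty\dfrac{q^n}{1-q^{2n}},
\end{align*}
so the claim \eqref{Aux-iden-3} reads $Y+L=D+Z-PS$.

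\emph{Step 1: the left side together with $D$.} Write $L(q)=\sum_{k\ge1}\frac{q^k}{1+q^{2k-1}}\sum_{n=1}^{k-1}1$, and add it to \eqref{AAB-iden} with the same outer sum. The inner summands combine as $1-\frac{q^n}{1+q^n}=\frac{1}{1+q^n}$, giving
\begin{align*}
Y(q)+L(q)=\sum_{k=1}^\infty\dfrac{q^k}{1+q^{2k-1}}\sum_{n=1}^{k-1}\dfrac{1}{1+q^n}.
\end{align*}
$D(q)$ has exactly the same outer structure, with inner summand $\frac{1}{1-q^{2n}}$. Since $\frac{1}{1+q^n}-\frac{1}{1-q^{2n}}=-\frac{q^n}{1-q^{2n}}$, subtracting gives
\begin{align*}
Y(q)+L(q)-D(q)=-\sum_{k=1}^\infty\dfrac{q^k}{1+q^{2k-1}}\sum_{n=1}^{k-1}\dfrac{q^n}{1-q^{2n}}.
\end{align*}

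\emph{Step 2: the right side.} By \eqref{Z-new-exp}, after renaming indices,
\begin{align*}
Z(q)=\sum_{k=1}^\infty\dfrac{q^k}{1+q^{2k-1}}\sum_{n=k}^\infty\dfrac{q^n}{1-q^{2n}}.
\end{align*}
Write the inner tail as $S(q)$ minus the finite sum over $1\le n\le k-1$. Summing the $S(q)$ part over $k$ produces $P(q)S(q)$, so
\begin{align*}
Z(q)-P(q)S(q)=-\sum_{k=1}^\infty\dfrac{q^k}{1+q^{2k-1}}\sum_{n=1}^{k-1}\dfrac{q^n}{1-q^{2n}}.
\end{align*}
This is exactly the expression obtained in Step 1. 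Replacing $P(q)$ by its product form \eqref{step-10} then yields \eqref{Aux-iden-3}.

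\emph{Where care is needed.} There is no serious obstacle. The one point to check is that splitting $Z$ as $\sum_k\frac{q^k}{1+q^{2k-1}}\bigl(S-\sum_{n<k}\bigr)$ into two separate series is legitimate. This holds for $|q|<1$, since every series involved converges absolutely. The rearrangement is the same type as those already used in the proofs of \eqref{Aux-iden-1} and \eqref{Aux-iden-2}, so it should go through without difficulty.
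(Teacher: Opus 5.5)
Your proposal is correct and follows the paper's proof essentially step by step: the same rewriting $Y+L=\sum_k\frac{q^k}{1+q^{2k-1}}\sum_{n<k}\frac{1}{1+q^n}$, the same split $\frac{1}{1+q^n}=\frac{1}{1-q^{2n}}-\frac{q^n}{1-q^{2n}}$, and the same comparison with \eqref{Z-new-exp} using \eqref{step-10}. The only cosmetic difference is where the tail split is done. You write $\sum_{n\ge k}=S-\sum_{n<k}$ inside $Z$, whereas the paper writes $\sum_{n<k}=S-\sum_{n\ge k}$ inside $Y+L$.
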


\begin{proof}
Starting from \eqref{AAB-iden}, we rewrite $Y(q)$ in a form suitable for further
manipulation:
\begin{align*}
Y(q) &=-\sum_{k=1}^\infty\dfrac{q^k}{1+q^{2k-1}}\sum_{n=1}^{k-1}\dfrac{q^n}{1+q^n}\\
 &=-\sum_{k=1}^\infty\dfrac{q^k}{1+q^{2k-1}}\sum_{n=1}^{k-1}\dfrac{\{(1+q^n)-1\}}
{1+q^n}\\
 &=-\sum_{k=1}^\infty\dfrac{q^k}{1+q^{2k-1}}\sum_{n=1}^{k-1}1
+\sum_{k=1}^\infty\dfrac{q^k}{1+q^{2k-1}}\sum_{n=1}^{k-1}\dfrac{1}{1+q^n}\\
 &=-\sum_{k=2}^\infty\dfrac{(k-1)q^k}{1+q^{2k-1}}+\sum_{k=1}^\infty
\dfrac{q^k}{1+q^{2k-1}}\sum_{n=1}^{k-1}\dfrac{1}{1+q^n}.
\end{align*}
By the definition of $L(q)$, this yields
\begin{align*}
Y(q)+L(q) &=\sum_{k=1}^\infty\dfrac{q^k}{1+q^{2k-1}}\sum_{n=1}^{k-1}
\dfrac{1}{1+q^n}\\
 &=\sum_{k=1}^\infty\dfrac{q^k}{1+q^{2k-1}}\sum_{n=1}^{k-1}\dfrac{1}{1-q^{2n}}
-\sum_{k=1}^\infty\dfrac{q^k}{1+q^{2k-1}}\sum_{n=1}^{k-1}\dfrac{q^n}{1-q^{2n}}\\
 &=\sum_{k=1}^\infty\dfrac{q^k}{1+q^{2k-1}}\sum_{n=1}^{k-1}\dfrac{1}{1-q^{2n}}
-\sum_{k=1}^\infty\dfrac{q^k}{1+q^{2k-1}}\sum_{n=1}^\infty\dfrac{q^n}{1-q^{2n}}\\
 &\quad+\sum_{k=1}^\infty\dfrac{q^k}{1+q^{2k-1}}\sum_{n=k}^\infty
\dfrac{q^n}{1-q^{2n}}.
\end{align*}
Invoking \eqref{step-10} together with the definition of $D(q)$, we deduce that
\begin{align}
Y(q)+L(q) &=D(q)-q\!\;\dfrac{(q^4;q^4)_\infty^4}{(q^2;q^2)_\infty^2}
\sum_{n=1}^\infty\dfrac{q^n}{1-q^{2n}}+\sum_{k=1}^\infty\dfrac{q^k}{1+q^{2k-1}}
\sum_{n=k}^\infty\dfrac{q^n}{1-q^{2n}}.\label{step-13}
\end{align}
On the other hand, by \eqref{Z-new-exp} we have
\begin{align}
Z(q)=\sum_{n=1}^\infty\dfrac{q^n}{1+q^{2n-1}}\sum_{k=n}^\infty\dfrac{q^k}{1-q^{2k}}.
\label{step-14}
\end{align}
Comparing \eqref{step-13} and \eqref{step-14}, we immediately obtain the desired
identity \eqref{Aux-iden-3}.
\end{proof}

The following lemma plays a central role in the proof of Theorem \ref{Main-THM-2}.

\begin{theorem}
\begin{align}
D(q)+D(-q) &=2\!\;\dfrac{(q^4;q^4)_\infty^4}{(q^2;q^2)_\infty^2}\sum_{n=1}^\infty
\dfrac{q^{2n}}{1-q^{4n-2}}.\label{Aux-iden-4}
\end{align}
\end{theorem}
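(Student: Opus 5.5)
The plan is to compute $D(q)+D(-q)$ directly from the definition. The key observation is that the inner sum of $D$ involves only even powers of $q$. Put
\[
S_j:=\sum_{n=1}^{j}\frac{1}{1-q^{2n}}.
\]
Then
\[
D(q)=\sum_{k\ge1}\frac{q^k}{1+q^{2k-1}}\,S_{k-1},
\]
and replacing $q$ by $-q$ leaves $S_{k-1}$ unchanged. Combining the two outer factors termwise gives
\[
\frac{q^k}{1+q^{2k-1}}+\frac{(-q)^k}{1-q^{2k-1}}=
\begin{cases}\dfrac{2q^k}{1-q^{4k-2}}, & k \text{ even},\\[2mm]
-\dfrac{2q^{3k-1}}{1-q^{4k-2}}, & k\text{ odd}.\end{cases}
\]
Hence
\[
D(q)+D(-q)=2\sum_{m\ge1}\frac{q^{2m}S_{2m-1}}{1-q^{8m-2}}-2\sum_{m\ge0}\frac{q^{6m+2}S_{2m}}{1-q^{8m+2}}.
\]
Both sides of \eqref{Aux-iden-4} are now visibly even in $q$. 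It therefore suffices to prove this even identity, where the right-hand side is $2(q^4;q^4)_\infty^4/(q^2;q^2)_\infty^2\sum_n q^{2n}/(1-q^{4n-2})$.

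Next I would remove the divergent part of $S_j$. Since $1/(1-q^{2n})\to1$, the tail trick used in \eqref{step-1} and \eqref{step-5} cannot be applied to $S_j$ directly. I would first write
\[
\frac{1}{1-q^{2n}}=1+\frac{q^{2n}}{1-q^{2n}},
\]
so that $S_j=j+T_j$ with $T_j=\sum_{n\le j}q^{2n}/(1-q^{2n})$. The $T_j$ parts are then handled as in the proofs of \eqref{Aux-iden-1} and \eqref{Aux-iden-2}: write $T_j$ as the full sum $\sum_{n\ge1}q^{2n}/(1-q^{2n})$ minus its tail. The full-sum part factors out and multiplies the outer sums
\[
\sum_m\frac{q^{2m}}{1-q^{8m-2}},\qquad \sum_m \frac{q^{6m+2}}{1-q^{8m+2}}.
\]
These outer sums recombine, by the same even/odd split run backwards, into $\tfrac12\bigl(\sum_k q^k/(1+q^{2k-1})+\sum_k(-q)^k/(1-q^{2k-1})\bigr)$-type expressions. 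Those I would evaluate by \eqref{step-10} and \eqref{step-11}. The tail parts I would expand geometrically, interchange the order of summation, and shift indices exactly as in \eqref{step-4} and \eqref{step-6}. The aim is to identify them with double Lambert series of the shape of $Z$, $X$, $Y$ evaluated at $\pm q$.

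At that stage the relations already proved can be invoked: \eqref{step-7} ($Z(-q)=Z(q)$), \eqref{Aux-iden-2}, and Theorem \ref{Main-THM} for $Y(q)$. With these, all double Lambert series in the expression should collapse into (product)$\times$(single Lambert series). The remaining single Lambert series would then be simplified to $\sum_n q^{2n}/(1-q^{4n-2})$ by elementary manipulations like those in \eqref{step-12}. The key rewriting there is
\[
\sum_n\frac{q^{2n}}{1-q^{4n-2}}=\sum_{n}\sum_{j\ge0}q^{2n+(4n-2)j}.
\]
This is then regrouped by the odd factor $2j+1$, splitting off odd/even parts as was done in \eqref{step-12}.

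The step I expect to be the main obstacle is the weighted part coming from $S_j=j+T_j$, namely
\[
\sum_m\frac{(2m-1)q^{2m}}{1-q^{8m-2}}-\sum_m\frac{2m\,q^{6m+2}}{1-q^{8m+2}}.
\]
This is the even part of an $L$-type series, and it does not telescope on its own. My first attempt would be to pair it with the $L(q)$, $L(-q)$ terms via \eqref{Aux-iden-3}. Evaluating \eqref{Aux-iden-3} at $q$ and at $-q$ and adding expresses $D(q)+D(-q)$ through $Y(q)+Y(-q)$, $Z(q)+Z(-q)=2Z(q)$, $L(q)+L(-q)$, and explicit product terms. By Theorem \ref{Main-THM}, $Y$ is odd, so $Y(q)+Y(-q)=0$. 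The claim then reduces to an evaluation of $2Z(q)-L(q)-L(-q)$. That evaluation is where the geometric expansion and index shifts described above would actually be carried out, and is the step most likely to need care.
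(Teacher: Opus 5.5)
Your termwise even/odd split and your reduction via \eqref{Aux-iden-3} are correct. The trouble is that the reduction lands on a statement equivalent to the hardest result in the paper, and the proposal stops there. Add \eqref{Aux-iden-3} at $q$ and at $-q$ and use $Y(-q)=-Y(q)$. With $P(q)=(q^4;q^4)_\infty^4/(q^2;q^2)_\infty^2$ this gives
\[
D(q)+D(-q)-2P(q)\sum_{n\ge1}\frac{q^{2n}}{1-q^{4n-2}}=L(q)+L(-q)-2Z(q).
\]
So the lemma is equivalent to $L(q)+L(-q)=2Z(q)$. Since $Z$ is even, that is exactly Theorem~\ref{Main-THM-2}.

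The paper proves Theorem~\ref{Main-THM-2} \emph{from} the present lemma, so you cannot use it here. You also give no independent argument for it: the deferred ``evaluation of $2Z(q)-L(q)-L(-q)$'' is the entire content of the lemma.

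Your $S_j=j+T_j$ bookkeeping will not supply that evaluation either. Since $\frac{q^{2n}}{1-q^{2n}}=\frac{q^n}{1-q^{2n}}-\frac{q^n}{1+q^n}$, the tail $\sum_k\frac{q^k}{1+q^{2k-1}}\sum_{n\ge k}\frac{q^{2n}}{1-q^{2n}}$ equals $Z(q)-A(q)$ by \eqref{Z-new-exp}. By \eqref{step-1} and \eqref{step-10}, $A(q)=Y(q)+qP(q)\sum_n\frac{q^n}{1+q^n}$. So that computation just re-derives \eqref{Aux-iden-3}.

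The relations you plan to feed in cannot close the gap. $L$ occurs in none of \eqref{step-7}, \eqref{Aux-iden-2}, \eqref{exp-Y-q}. Moreover, $X$ cancels between \eqref{Aux-iden-2} and \eqref{step-8}, so these relations leave $Z(q)$ tied to the unevaluated series $X(q)$. Nothing there collapses $Z$ to a product times a single Lambert series.

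What is missing is a second expansion of $D$ in which the weighted series $L$ appears twice with opposite signs, together with a relation that feeds $D$ back into itself. The paper does this in four steps:
\begin{enumerate}
\item It expands geometrically to get \eqref{step-15}, $D(q)=-\sum_k\frac{(-1)^kq^k}{1-q^{2k-1}}\sum_{n\ge k}\frac{q^{2n-1}}{1-q^{2n-1}}$.
\item It evaluates the full inner sum by \eqref{step-11}. In the finite part $n<k$ it writes $\frac{q^{2n-1}}{1-q^{2n-1}}=-1+\frac{1}{1-q^{2n-1}}$, and the $-1$ produces $-L(-q)$.
\item After $q\to-q$, the leftover double series is split using $\frac{q^{-n}}{1+q^{2n-1}}=q^{-n}-\frac{q^{n-1}}{1+q^{2n-1}}$. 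This gives $-L(q)-q^{-1}N(q)$, so $L(q)$ cancels and \eqref{step-20} follows.
\item $N(q)$ is rearranged back, via \eqref{step-15}, into $qP(q)\sum_k\frac{q^{2k}}{1-q^{2k-1}}-qD(q)$; see \eqref{step-22}.
\end{enumerate}
The result is $D(-q)=-D(q)+P(q)\sum_n\bigl(\frac{q^{2n}}{1+q^{2n-1}}+\frac{q^{2n}}{1-q^{2n-1}}\bigr)$. It is obtained directly, with no use of $Y$, $Z$ or Theorem~\ref{Main-THM}, and that independence is what lets the paper then derive Theorem~\ref{Main-THM-2}.
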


\begin{proof}
We begin by rewriting $D(q)$ in a more tractable form. A sequence of manipulations
yields
\begin{align*}
D(q) &=\sum_{n=1}^\infty\dfrac{1}{1-q^{2n}}\sum_{k=n+1}^\infty\dfrac{q^k}
{1+q^{2k-1}}\\
 &=\sum_{n=1}^\infty\dfrac{1}{1-q^{2n}}\sum_{m=0}^\infty\dfrac{q^{n+m+1}}
{1+q^{2n+2m+1}}\\
 &=\sum_{n=1}^\infty\dfrac{q^{n+1}}{1-q^{2n}}\sum_{m=0}^\infty q^m\sum_{k=0}^\infty
(-1)^kq^{2nk+2mk+k}\\
 &=\sum_{n=1}^\infty\dfrac{q^{n+1}}{1-q^{2n}}\sum_{k=0}^\infty(-1)^kq^{2nk+k}
\sum_{m=0}^\infty q^{(2k+1)m}\\
 &=\sum_{n=1}^\infty\dfrac{q^n}{1-q^{2n}}\sum_{k=0}^\infty
\dfrac{(-1)^kq^{2nk+k+1}}{1-q^{2k+1}}\\
 &=\sum_{n=1}^\infty\dfrac{1}{1-q^{2n}}\sum_{k=1}^\infty
\dfrac{(-1)^{k-1}q^{2nk-n+k}}{1-q^{2k-1}}\\
 &=-\sum_{k=1}^\infty\sum_{n=1}^\infty\dfrac{(-1)^kq^{2nk-n+k}}
{(1-q^{2n})(1-q^{2k-1})}.
\end{align*}
Consequently, we deduce that
\begin{align}
D(q) &=-\sum_{k=1}^\infty\dfrac{(-1)^kq^k}{1-q^{2k-1}}\sum_{n=1}^\infty
\dfrac{q^{2nk-n}}{1-q^{2n}}\nonumber\\
 &=-\sum_{k=1}^\infty\dfrac{(-1)^kq^k}{1-q^{2k-1}}\sum_{n=1}^\infty q^{2nk-n}
\sum_{m=0}^\infty q^{2mn}\nonumber\\
 &=-\sum_{k=1}^\infty\dfrac{(-1)^kq^k}{1-q^{2k-1}}\sum_{m=0}^\infty\sum_{n=1}^\infty
q^{(2k+2m-1)n}\nonumber\\
 &=-\sum_{k=1}^\infty\dfrac{(-1)^kq^k}{1-q^{2k-1}}\sum_{m=0}^\infty
\dfrac{q^{2k+2m-1}}{1-q^{2k+2m-1}}\nonumber\\
 &=-\sum_{k=1}^\infty\dfrac{(-1)^kq^k}{1-q^{2k-1}}\sum_{n=k}^\infty
 \dfrac{q^{2n-1}}{1-q^{2n-1}}.\label{step-15}
\end{align}

Next, we relate $D(q)$ to $L(-q)$. Starting from \eqref{step-15}, we write
\begin{align*}
D(q) &=-\sum_{k=1}^\infty\dfrac{(-1)^kq^k}{1-q^{2k-1}}\sum_{n=k}^\infty
 \dfrac{q^{2n-1}}{1-q^{2n-1}}\nonumber\\
 &=-\sum_{k=1}^\infty\dfrac{(-1)^kq^k}{1-q^{2k-1}}\sum_{n=1}^\infty
\dfrac{q^{2n-1}}{1-q^{2n-1}}+\sum_{k=1}^\infty\dfrac{(-1)^kq^k}{1-q^{2k-1}}
\sum_{n=1}^{k-1}\dfrac{q^{2n-1}}{1-q^{2n-1}}.
\end{align*}
Evaluating the first term and decomposing the second, we obtain that
\begin{align}
D(q) &=q\!\;\dfrac{(q^4;q^4)_\infty^4}{(q^2;q^2)_\infty^2}\sum_{n=1}^\infty
\dfrac{q^{2n-1}}{1-q^{2n-1}}+\sum_{k=1}^\infty\dfrac{(-1)^kq^k}{1-q^{2k-1}}
\sum_{n=1}^{k-1}\dfrac{\{(q^{2n-1}-1)+1\}}{1-q^{2n-1}}\nonumber\\
 &=\dfrac{(q^4;q^4)_\infty^4}{(q^2;q^2)_\infty^2}\sum_{n=1}^\infty\dfrac{q^{2n}}
{1-q^{2n-1}}-\sum_{k=1}^\infty\dfrac{(-1)^kq^k}{1-q^{2k-1}}
\sum_{n=1}^{k-1}1\nonumber\\
 &\hspace{11.4em}+\sum_{k=1}^\infty\dfrac{(-1)^kq^k}{1-q^{2k-1}}\sum_{n=1}^{k-1}
\dfrac{1}{1-q^{2n-1}}\nonumber\\
 &=\dfrac{(q^4;q^4)_\infty^4}{(q^2;q^2)_\infty^2}\sum_{n=1}^\infty\dfrac{q^{2n}}
{1-q^{2n-1}}-L(-q)+\sum_{n=1}^\infty\dfrac{1}{1-q^{2n-1}}\sum_{k=n+1}^\infty
\dfrac{(-1)^kq^k}{1-q^{2k-1}}\nonumber\\
 &=\dfrac{(q^4;q^4)_\infty^4}{(q^2;q^2)_\infty^2}\sum_{n=1}^\infty\dfrac{q^{2n}}
{1-q^{2n-1}}-L(-q)\nonumber\\
 &\hspace{11.4em}+\sum_{n=1}^\infty\dfrac{1}{1-q^{2n-1}}\sum_{m=0}^\infty
\dfrac{(-1)^{n+m+1}q^{n+m+1}}{1-q^{2n+2m+1}}.\label{step-16}
\end{align}

We now simplify the last term in \eqref{step-16}. A straightforward computation
shows that
\begin{align}
\sum_{n=1}^\infty &\,\dfrac{1}{1-q^{2n-1}}\sum_{m=0}^\infty
\dfrac{(-1)^{n+m+1}q^{n+m+1}}{1-q^{2n+2m+1}}\nonumber\\
 &=\sum_{n=1}^\infty\dfrac{(-1)^{n+1}q^{n+1}}{1-q^{2n-1}}\sum_{m=0}^\infty(-1)^mq^m
\sum_{k=0}^\infty q^{2nk+2mk+k}\nonumber\\
 &=\sum_{n=1}^\infty\dfrac{(-1)^{n+1}q^{n+1}}{1-q^{2n-1}}\sum_{k=0}^\infty q^{2nk+k}
\sum_{m=0}^\infty(-1)^mq^{(2k+1)m}\nonumber\\
 &=\sum_{n=1}^\infty\dfrac{(-1)^{n+1}q^{n+1}}{1-q^{2n-1}}\sum_{k=0}^\infty
\dfrac{q^{2nk+k}}{1+q^{2k+1}}\nonumber\\
 &=\sum_{n=1}^\infty\dfrac{(-1)^{n+1}}{1-q^{2n-1}}\sum_{k=1}^\infty
\dfrac{q^{2nk+k-n}}{1+q^{2k-1}}.\label{step-17}
\end{align}
Substituting \eqref{step-17} into \eqref{step-16}, we find that
\begin{align*}
D(q)=\dfrac{(q^4;q^4)_\infty^4}{(q^2;q^2)_\infty^2}\sum_{n=1}^\infty\dfrac{q^{2n}}
{1-q^{2n-1}} &-L(-q)\\
 &-\sum_{n=1}^\infty\sum_{k=1}^\infty\dfrac{(-1)^nq^{2nk+k-n}}
{(1+q^{2k-1})(1-q^{2n-1})},
\end{align*}
which is equivalent to
\begin{align}
D(-q)=\dfrac{(q^4;q^4)_\infty^4}{(q^2;q^2)_\infty^2}\sum_{n=1}^\infty\dfrac{q^{2n}}
{1+q^{2n-1}} &-L(q)\nonumber\\
 &-\sum_{n=1}^\infty\sum_{k=1}^\infty\dfrac{(-1)^kq^{2nk+k-n}}
{(1-q^{2k-1})(1+q^{2n-1})}.\label{step-18}
\end{align}

We restrict our attention to the last term in \eqref{step-18}. A series of
manipulation yields
\begin{align*}
\sum_{n=1}^\infty\sum_{k=1}^\infty &\,\dfrac{(-1)^kq^{2nk+k-n}}
{(1-q^{2k-1})(1+q^{2n-1})}\\
 &=\sum_{k=1}^\infty\dfrac{(-1)^kq^{2nk+k}}{1-q^{2k-1}}\sum_{n=1}^\infty
\dfrac{\{(q^{-n}+q^{n-1})-q^{n-1}\}}{1+q^{2n-1}}\\
 &=\sum_{k=1}^\infty\sum_{n=1}^\infty\dfrac{(-1)^kq^{2nk+k-n}}{1-q^{2k-1}}
-q^{-1}N(q)\\
 &=\sum_{k=1}^\infty\dfrac{(-1)^kq^k}{1-q^{2k-1}}\sum_{n=1}^\infty q^{(2k-1)n}
-q^{-1}N(q)\\
 &=\sum_{k=1}^\infty\dfrac{(-1)^kq^{3k-1}}{(1-q^{2k-1})^2}-q^{-1}N(q).
\end{align*}
On the other hand,
\begin{align*}
L(q) &=\sum_{n=0}^\infty\dfrac{n\!\;q^{n+1}}{1+q^{2n+1}}\\
 &=\sum_{n=0}^\infty n\!\;q^{n+1}\sum_{m=0}^\infty(-1)^mq^{(2n+1)m}\\
 &=\sum_{m=0}^\infty(-1)^mq^{m+1}\sum_{n=0}^\infty n\!\;q^{(2m+1)n}\\
 &=\sum_{m=0}^\infty(-1)^mq^{m+1}\!\;\dfrac{q^{2m+1}}{(1-q^{2m+1})^2}\\
 &=\sum_{m=1}^\infty\dfrac{(-1)^{m-1}q^{3m-1}}{(1-q^{2m-1})^2}.
\end{align*}
Therefore,
\begin{align}
\sum_{n=1}^\infty\sum_{k=1}^\infty\dfrac{(-1)^kq^{2nk+k-n}}{(1-q^{2k-1})
(1+q^{2n-1})}=-L(q)-q^{-1}N(q).\label{step-19}
\end{align}
Plugging \eqref{step-19} into \eqref{step-18}, we obtain that
\begin{align}
D(-q)=\dfrac{(q^4;q^4)_\infty^4}{(q^2;q^2)_\infty^2}\sum_{n=1}^\infty\dfrac{q^{2n}}
{1+q^{2n-1}}+q^{-1}N(q).\label{step-20}
\end{align}

It remains to evaluate $N(q)$. Starting from its definition, a routine calculation
yields
\begin{align}
N(q) &=\sum_{k=1}^\infty\dfrac{(-1)^kq^k}{1-q^{2k-1}}\sum_{n=1}^\infty
\dfrac{q^{(2k+1)n}}{1+q^{2n-1}}\nonumber\\
 &=\sum_{k=1}^\infty\dfrac{(-1)^kq^k}{1-q^{2k-1}}\sum_{n=1}^\infty q^{(2k+1)n}
\sum_{m=0}^\infty(-1)^mq^{(2n-1)m}\nonumber\\
 &=\sum_{k=1}^\infty\dfrac{(-1)^kq^k}{1-q^{2k-1}}\sum_{m=0}^\infty(-1)^mq^{-m}
\sum_{n=1}^\infty q^{(2k+2m+1)n}\nonumber\\
 &=\sum_{k=1}^\infty\dfrac{(-1)^kq^k}{1-q^{2k-1}}\sum_{m=0}^\infty(-1)^m
\dfrac{q^{2k+m+1}}{1-q^{2k+2m+1}}.\label{step-21}
\end{align}
Taking $k+m=n$ in the inner summation in the last equality of \eqref{step-21}, we
deduce that
\begin{align}
N(q) &=\sum_{k=1}^\infty\dfrac{(-1)^kq^{2k}}{1-q^{2k-1}}\sum_{n=k}^\infty
(-1)^{n-k}\dfrac{q^{n+1}}{1-q^{2n+1}}\nonumber\\
 &=\sum_{k=1}^\infty\dfrac{q^{2k}}{1-q^{2k-1}}\sum_{n=k+1}^\infty
\dfrac{(-1)^{n+1}q^n}{1-q^{2n-1}}\nonumber\\
 &=-\sum_{n=1}^\infty\dfrac{(-1)^nq^n}{1-q^{2n-1}}\sum_{k=1}^{n-1}
\dfrac{q^{2k}}{1-q^{2k-1}}\nonumber\\
 &=-\sum_{n=1}^\infty\dfrac{(-1)^nq^n}{1-q^{2n-1}}\sum_{k=1}^\infty
\dfrac{q^{2k}}{1-q^{2k-1}}+\sum_{n=1}^\infty\dfrac{(-1)^nq^n}{1-q^{2n-1}}
\sum_{k=n}^\infty\dfrac{q^{2k}}{1-q^{2k-1}}\nonumber\\
 &=q\!\;\dfrac{(q^4;q^4)_\infty^4}{(q^2;q^2)_\infty^2}\sum_{k=1}^\infty
\dfrac{q^{2k}}{1-q^{2k-1}}-qD(q),\label{step-22}
\end{align}
where the last step in \eqref{step-22} follows from \eqref{step-15}.

Finally, substituting \eqref{step-22} into \eqref{step-20}, we obtain that
\begin{align}
\sum_{n=1}^\infty\sum_{k=1}^\infty\dfrac{(-1)^kq^{2nk+k-n}}{(1-q^{2k-1})
(1+q^{2n-1})}
 &=D(q)-L(q)-\dfrac{(q^4;q^4)_\infty^4}{(q^2;q^2)_\infty^2}\sum_{k=1}^\infty
\dfrac{q^{2k}}{1-q^{2k-1}}.\label{step-23}
\end{align}
Substituting \eqref{step-23} into \eqref{step-18}, we obtain that
\begin{align*}
D(-q)=\dfrac{(q^4;q^4)_\infty^4}{(q^2;q^2)_\infty^2}\sum_{n=1}^\infty
\dfrac{q^{2n}}{1+q^{2n-1}}+\dfrac{(q^4;q^4)_\infty^4}{(q^2;q^2)_\infty^2}
\sum_{k=1}^\infty\dfrac{q^{2k}}{1-q^{2k-1}}-D(q),
\end{align*}
from which we conclude that
\begin{align*}
D(q)+D(-q) &=\dfrac{(q^4;q^4)_\infty^4}{(q^2;q^2)_\infty^2}\sum_{n=1}^\infty
\bigg(\dfrac{q^{2n}}{1+q^{2n-1}}+\dfrac{q^{2n}}{1-q^{2n-1}}\bigg)\\
 &=2\!\;\dfrac{(q^4;q^4)_\infty^4}{(q^2;q^2)_\infty^2}\sum_{n=1}^\infty
\dfrac{q^{2n}}{1-q^{4n-2}},
\end{align*}
which completes the proof.
\end{proof}

We are now in a position to prove Theorem \ref{Main-THM-2}.

\begin{proof}[Proof of Theorem \ref{Main-THM-2}]
Replacing $q$ by $-q$ in \eqref{Aux-iden-3} yields
\begin{align}
Y(-q)+L(-q)=D(-q)+Z(-q)+q\!\;\dfrac{(q^4;q^4)_\infty^4}{(q^2;q^2)_\infty^2}
\sum_{n=1}^\infty\dfrac{(-1)^nq^n}{1-q^{2n}}.\label{Aux-iden-3-dual}
\end{align}
It follows immediately from \eqref{exp-Y-q} that
\begin{align}
Y(-q)=-Y(q).\label{Use-iden-1}
\end{align}
By the definition of $Z(q)$, we have
\begin{align}
Z(q)=Z(-q).\label{Use-iden-2}
\end{align}
Combining \eqref{Aux-iden-3} and \eqref{Aux-iden-3-dual}, and using
\eqref{Use-iden-1} and \eqref{Use-iden-2}, we obtain that
\begin{align}
L(q)+L(-q) &=D(q)+D(-q)+2Z(q)\nonumber\\
 &\hspace{3.4em}-q\!\;\dfrac{(q^4;q^4)_\infty^4}{(q^2;q^2)_\infty^2}
\sum_{n=1}^\infty\bigg(\dfrac{q^n}{1-q^{2n}}-\dfrac{(-1)^nq^n}{1-q^{2n}}
\bigg)\nonumber\\
 &=D(q)+D(-q)+2Z(q)-2\!\;\dfrac{(q^4;q^4)_\infty^4}{(q^2;q^2)_\infty^2}
\sum_{n=1}^\infty\dfrac{q^{2n}}{1-q^{4n-2}}.\label{step-24}
\end{align}
Applying \eqref{Aux-iden-4} to \eqref{step-24}, we immediately find that
\begin{align*}
L(q)+L(-q)=2Z(q)=Z(q)+Z(-q),
\end{align*}
which establishes \eqref{AAB-conj}.
\end{proof}

\begin{remark}
The proof of \eqref{AAB-conj} relies heavily on Conjecture \ref{ADSY-conj}. Indeed, a
direct application of \eqref{Aux-iden-3} and \eqref{Aux-iden-3-dual} shows that
Conjecture \ref{ADSY-conj} follows immediately from \eqref{AAB-conj}.
Therefore, Conjecture \ref{ADSY-conj} and Theorem \ref{Main-THM-2} are, to some
extent, equivalent.
\end{remark}

\section{Concluding remarks}

In this paper, building on two identities due to Amdeberhan et al.\ \cite{AAB2026} and
combining them with systematic $q$-series manipulations, we derive an explicit
expression for a double Lambert series. Thisidentity yields a direct and transparent
proof of the ADSY conjecture concerning the parity of such series. As a further
application, we establish a conjecture of Amdeberhan, Andrews, and Ballantine relating
Lambert series and double Lambert series.

Beyond resolving these conjectures, our approach highlights the effectiveness of
exploiting structural relations among double Lambert series. In particular, it provides
a unified framework in which parity phenomena arise naturally from analytic identities,
rather than from ad hoc or case-dependent arguments. From this perspective, the present
work not only confirms the ADSY conjecture and the Amdeberhan--Andrews--Ballantine
conjecture, but also suggests that the methods developed here may be applicable to a
broader class of problems involving Lambert series and related $q$-series.

\section*{Acknowledgements}

Su-Ping Cui was partially supported by the National Natural Science Foundation of China
(No.~12001309).
Dazhao Tang was partially supported by the National Natural Science Foundation of China
(No.~12201093) and the Science and Technology Research Program of Chongqing Municipal
Education Commission (No.~KJQN202500501).

\bibliographystyle{amsplain}

\end{document}